\numberwithin{equation}{section} %公式按节排号，这里去掉
\theoremstyle{plain}
\newtheorem{exam}{Example}[section]
\newtheorem{theorem}[exam]{Theorem}
\newtheorem{remark}[exam]{Remark}
\newtheorem{definition}[exam]{Definition}
\newtheorem{corollary}[exam]{Corollary}
\begin{document}
\date{}
\title{%Sharp regularity of the Hartman-Grobman theorem in $C^0$ linearization
Sharpness of  $C^0$ conjugacy for the non-autonomous differential equations with Lipschitzian perturbation
%Sharp regularity of the Hartman-Grobman theorem for the non-autonomous differential equations with Lipschitzian perturbation
\footnote{ This paper was jointly supported from the National Natural
Science Foundation of China under Grant (No. 11671176, 11931016) and  Grant Fondecyt 1170466.}
}
\author
{
Weijie Lu$^{a}$\,\,\,\,\,
Manuel Pinto$^{b}$\,\, \,\,
Y-H. Xia$^{a}\footnote{Corresponding author. Yonghui Xia, xiadoc@outlook.com;yhxia@zjnu.cn. %Address: College of Mathematics and Computer Science,  Zhejiang Normal University, 321004, Jinhua, China
}$
\\
{\small \textit{$^a$ College of Mathematics and Computer Science,  Zhejiang Normal University, 321004, Jinhua, China}}\\
{\small \textit{$^b$ Departamento de Matem\'aticas, Universidad de Chile, Santiago, Chile }}\\
{\small Email: luwj@zjnu.edu.cn;  pintoj.uchile@gmail.com; xiadoc@outlook.com; yhxia@zjnu.cn.}
}

\maketitle

\begin{abstract}
  The classical $C^0$ linearization
  %Hartman-Grobman
  theorem for the non-autonomous differential equations
   states the existence of a $C^0$ topological conjugacy between the nonlinear system and its linear part. That is, there exists a homeomorphism (equivalent function) $H$ sending the solutions of the nonlinear system onto those of its linear part.
It is proved in the previous literature that the equivalent function $H$ and its inverse $G=H^{-1}$ are both H\"{o}lder continuous if the nonlinear perturbation is Lipschitzian. Questions: {\bf is it possible to improve the regularity? Is the regularity sharp?} To answer this question, we construct a counterexample to show that the equivalent function
$H$ is exactly Lipschitzian, but the inverse $G=H^{-1}$ is merely H\"{o}lder continuous. Furthermore, we propose a conjecture that such regularity of the homeomorphisms is sharp (it could not be improved anymore). We prove that {\color{blue} the conjecture is true for the systems with linear contraction}.
{Furthermore, we present the special cases of  linear perturbation, which are closely related to the spectrum.}\\
{\bf Keywords}: topological conjugacy; exponential dichotomy; Hartman-Grobman; regularity; Lipschitzian\\
 {\bf  MSC2020:} 34C41; 34D09; 34D10
\end{abstract}
  %Questions: is it possible to improve the regularity? Is the regularity sharp?
  %In the present paper, we prove that if the linear system is contractive, then the equivalent function
%$H$ is exactly Lipschitzian, but the inverse $G=H^{-1}$ is merely H\"{o}lder continuous.
 % Furthermore, we present an example   to illustrates the sharpness of the equivalent functions and we show that the
 % regularity could not be improved anymore. It is the first time to observe these new facts.

\section{Introduction}
\subsection{History of linearization}
    The well-known Hartman-Grobman theorem \cite{Hartman1,Grobman1} pointed out that nearby the hyperbolic equilibrium $u^{*}=0$,
the dynamic behaviour of the nonlinear system $u'=Au+f(u)$, where $f\in C^{1}(\mathbb{R}^{n})$, $f(x)=o(\|x\|)$,
is topologically conjugated to its linear problem $v'=Av$.
   Pugh \cite{Pugh1} initially studied global linearization and  presented a particular case of the Hartman-Grobman theorem as long as $f$
satisfies some goodness conditions, like continuity, boundedness or being Lipschitzian.
   Moreover, the Hartman-Grobman theorem has been extended to many classes of differential equations such as retarded functional equations
by Farkas \cite{Farkas},
scalar reaction-diffusion equations by Lu \cite{Lu1},  Cahn-Hilliard equation and phase field equations by Lu and Bates \cite{Lu2} and
semilinear hyperbolic evolution equations by Hein and Pr\"{u}ss \cite{Hein-Pruss1}. Except for the $C^0$ linearization of the differential equations,
     Sternberg (\cite{Sternberg1,Sternberg2}) initially investigated $C^r$ linearization for $C^k$ diffeomorphisms.
Sell (\cite{Sell1}) extended the theorem of Sternberg. %He studied a diffeomorphism in the vicinity of a hyperbolic fixed point.
More mathematicians paid particular attention on the $C^1$ linearization.
 Belitskii \cite{Belitskii1},
ElBialy \cite{ElBialy1}, Rodrigues and Sol\`a-Morales \cite{RS-JDE} studied that
$C^1$ linearization of hyperbolic diffeomorphisms on Banach space independently.
   Recently, Zhang and Zhang \cite{ZWN-JFA} proved that the    $C^1$ linearization is H\"{o}lder continuous. %A counter example was given to show that those estimates of   H\"{o}lder exponent cannot be improved anymore.
   Zhang et al. \cite{ ZWN-MA,ZWN-ETDS,ZWN-JDE}  showed the sharp regularity of linearization for $C^{1}$ or $C^{1,1}$ hyperbolic diffeomorphisms, and it is proved that the   linearization is H\"{o}lder continuous.

    Efforts were made to the linearization of the nonautonomous dynamical system initially by Palmer \cite{Palmer1}.
   %He proved that if the linear part $A(t)$ admits an exponential dichotomy (in some sense, the hyperbolicity property of the autonomous case), and the nonlinear term $f(t,x)$ satisfies boundedness and Lipschitzian, then there exists a conjugacy $H(t,x)$ such that the nonlinear system can be described by the linear system $y'=A(t)y$.
   To weakened the conditions of Palmer's theorem,
   Jiang \cite{Jiang1} gave a version of  linearization result by  a generalized exponential dichotomy.
  Recently, Barreira and Valls \cite{B-V,B-V1,B-V2,B-V3,B-V4,B-V5} proved several versions of Hartman-Grobman theorem provided that the linear homogenous system admits a nonuniform contraction or nonuniform dichotomy. Xia et al. \cite{Xia-BSM} improved Palmer's linearization by assuming that the nonlinear perturbation would be unbounded or non-Lipschitzian.
    Huerta et al. \cite{Huerta2,Huerta1} constructed conjugacies between linear system and an unbounded nonlinear perturbation for a nonuniform contraction.
    { Inspired by the work of Reinfelds and \v{S}teinberga \cite{Reinfelds-IJPAM}, Backes et al. \cite{BDK-JDE} extended the linearization theorem to a class of non-hyperbolic system.
    Newly,  Backes and Dragi\v{c}evi\'{c} \cite{BD-arXiv2}  further gave some results of smooth linearization on the basis of \cite{BDK-JDE}.
  Furthermore, %Backes and Dragi\v{c}evi\'{c}
  in \cite{BD-arXiv1}, they presented the multiscale linearization of nonautonomous systems.
It is interesting that if the nonlinear perturbations are well-behaved, they do not  assume any asymptotic behaviour of the linear system. }
    Other generalizations of Hartman-Grobman theorem were given for
  the instantaneous impulsive system \cite{Reinfelds1,Fenner-Pinto,Xia1},
dynamic systems on time scales \cite{Potzche1}, differential equations with piecewise constant argument \cite{Zou-Xia,Pinto-depcag}.

   % Other linearization theorem can be found in the monographs, see Chicone \cite{Chicone1}, Hale \cite{Hale1}, P\"{o}tzche \cite{Potzche2}, Shi and Zhang \cite{Shi-Zhang1}.

  \subsection{ Counterexample and Motivation }

    In this paper, we pay particular attention to the regularity of the equivalent function $H$ which transforms the solutions of nonlinear system onto its linear part. It is proved in the previous works that the equivalent function and its inverse are H\"{o}lder continuous for the Hartman-Grobman theorem in $C^0$ linearization when the nonlinear perturbations are Lipschitzian (see e.g.
Hein and Pr\"{u}ss \cite{ Hein-Pruss1}, Jiang \cite{Jiang1}, Barreira and Valls \cite{B-V,B-V1,B-V2,B-V3,B-V5},Xia et al \cite{Xia-BSM}, Backes et al \cite{BDK-JDE}, Xia et al \cite{Xia1},Shi and Zhang \cite{Shi-Zhang1}).
    However, the following counterexample is to show that the equivalent function
$H$ is {\bf exactly Lipschitzian}, but the inverse $G=H^{-1}$ is merely H\"{o}lder continuous.

 \subsubsection{Counterexample to H\"older continuity of the transformation }

    \newtheorem{Exam}[exam]{Example}
\begin{Exam}\label{sharp-ex}
{\em
%This example is given to show the sharpness of the regularity.
For the sake of readable and easily verification, we consider the scalar equation
\begin{equation}\label{n}
  x'=-x+f(x).
\end{equation}
We take the nonlinear perturbation $f$ as follows
\[
f(x)=
\begin{cases}
\epsilon, &  x\geq 1,\\
\epsilon x,&  0\leq x< 1,\\
 \epsilon x^{3},&  -1< x< 0,\\
 -\epsilon, &  x\leq -1,
\end{cases}
\]
where $0<\epsilon< \frac{1}{3}$.
 We see that
for all $x, x_{1}, x_{2}$,
\[ |f(x)|\leq\epsilon, \quad |f(x_{1})-f(x_{2})|\leq 3\epsilon|x_{1}-x_{2}|.\]
Moreover, the linear equation $x'=-x$ has an uniform contraction with constant $k=1$ and exponent $\alpha=1$.
Taking $k=1, L_{f}=3\epsilon, \alpha=1$ ($L_{f}K/\alpha=3\epsilon<1$), it is easy to show that there exists a  homeomorphism
$H(x)$ %($H(t, x)=H(x)$ in the autonomous system)
 transforming the solution $x(t)$ of \eqref{n} onto that of its linear part
\begin{equation}\label{l}
  x'=-x.
\end{equation}
%Moreover, $H$ is a homeomorphism and $G=H^{-1}$ has similar properties to $H$.
To show the regularity of $H$ and  $G=H^{-1}$ (the inverse). We dive it into three steps.

\noindent {\bf Step 1}. We obtain the {\bf explicit expressions} of $H$ and  $G$.
Notice that
\[
-x+f(x)
\begin{cases}
  <0, & \mathrm{if}\; x>0, \\
  =0, & \mathrm{if}\; x=0, \\
  >0, & \mathrm{if}\;  x<0.
\end{cases}
\]
Thus a solution is either always $0$, always $<0$ or always $>0$. Now we divide the discussion into three cases.

\noindent {\em (I) zero solution.} $H(0)=0$ since $0$ is a solution of \eqref{n} and $H(0)$ is the unique solution of \eqref{l}. But $0$ is such a solution.

\noindent {\em (II) positive solution $x(t)>0$. } Clearly, $x(t)$ is strictly decreasing, i.e., $x(t)\rightarrow 0$ as $t\rightarrow +\infty$;
$x(t)\rightarrow \infty$ as $t\rightarrow -\infty$.
Therefore, there must exists a unique time $t_{0}$ such that $x(t_{0})=1$. Without loss of generality, we take $t_{0}=0$.
 If $t<0$, then $x(t)>1$ and so $x'(t)=-x(t)+\epsilon$ with $x(0)=1$.
 Hence,
 \[ x(t)=(1-\epsilon)e^{-t}+\epsilon,\quad t\leq 0.\]
If $t>0$, then $0<x(t)<1$ and so $x'(t)=-x(t)+\epsilon x(t)$ with $x(0)=1$. Hence,
\[ x(t)=e^{(-1+\epsilon)t}, \quad t\geq 0.\]
We need to find the unique solution $y(t)$ of \eqref{l} such that $|y(t)-x(t)|$ is bounded.
Looking at $x(t)$ when $t\leq 0$, we see that $y(t)=(1-\epsilon)e^{-t}$.
Hence for all $t$,
\[H(x(t))=(1-\epsilon)e^{-t}.\]
Then
\[ H(1)=H(x(0))=1-\epsilon.\]
If $\xi>1$, then there exists a unique time $ t<0$ such that $x(t)=(1-\epsilon)e^{-t}+\epsilon=\xi$. Then
\[ H(\xi)=H(x(t))=(1-\epsilon)e^{-t}=x(t)-\epsilon=\xi-\epsilon.\]
If $0<\xi<1$, then there exists a unique time $t>0$ such that $x(t)=e^{(-1+\epsilon)t}=\xi$. Then
\[ H(\xi)=H(x(t))=(1-\epsilon)e^{-t}=(1-\epsilon)x(t)^{\frac{1}{1-\epsilon}}
    =(1-\epsilon)\xi^{\frac{1}{1-\epsilon}}.\]
Therefore,
\[
H(x)=
\begin{cases}
  (1-\epsilon)x^{\frac{1}{1-\epsilon}}, &  0<x< 1, \\
  x-\epsilon, &  x\geq 1.
\end{cases}
\]

\noindent {\em (III) negative solution $x(t)<0$. } Clearly, $x(t)$ is strictly increasing, i.e., $x(t)\rightarrow 0$ as $t\rightarrow \infty$;
$x(t)\rightarrow -\infty$ as $t\rightarrow -\infty$.
So there must exists a unique time $t_{0}$ such that $x(t_{0})=-1$. Without loss of generality, we take $t_{0}=0$.
 If $t<0$, then $x(t)<-1$ and so $x'(t)=-x(t)-\epsilon$ with $x(0)=-1$.
 Hence,
 \[ x(t)=(\epsilon-1)e^{-t}-\epsilon,\quad t\leq 0.\]
If $t>0$, then $-1<x(t)<0$ and so $x'(t)=-x(t)+\epsilon x^{3}(t)$ with $x(0)=-1$. Letting $z=x^{-2}$, then $z'=2z-2\epsilon$ with $z(0)=1$. Hence,
$z(t)=(1-\epsilon)e^{2t}+\epsilon$, that is,
\[ x(t)=-\left[(1-\epsilon)e^{2t}+\epsilon\right]^{-\frac{1}{2}}, \quad t\geq 0.\]
We need to find the unique solution $y(t)$ of \eqref{l} such that $|y(t)-x(t)|$ is bounded.
Looking at $x(t)$ when $t\leq 0$, we see that $y(t)=(\epsilon-1)e^{-t}$.
Hence for all $t$,
\[H(x(t))=(\epsilon-1)e^{-t}.\]
Then
\[ H(1)=H(x(0))=\epsilon-1.\]
If $\xi<-1$, then there exists a unique time $ t<0$ such that $x(t)=(\epsilon-1)e^{-t}-\epsilon=\xi$. Consequently,
\[ H(\xi)=H(x(t))=(\epsilon-1)e^{-t}=x(t)+\epsilon=\xi+\epsilon.\]
If $-1<\xi<0$, then there exists a unique time $t>0$ such that $x(t)=-\left[(1-\epsilon)e^{2t}+\epsilon\right]^{-\frac{1}{2}}=\xi$. Consequently,
\[ H(\xi)=H(x(t))=(\epsilon-1)e^{-t}=-(1-\epsilon)^{\frac{3}{2}}\left(\frac{1}{(-x(t))^{2}}-\epsilon\right)^{-\frac{1}{2}}
    =-(1-\epsilon)^{\frac{3}{2}}\left(\frac{1}{(-\xi)^{2}}-\epsilon\right)^{-\frac{1}{2}}.\]
Thus  we obtain that
\[
H(x)=
\begin{cases}
  -(1-\epsilon)^{\frac{3}{2}}\left(\frac{1}{(-x)^{2}}-\epsilon\right)^{-\frac{1}{2}}, &  -1<x< 0, \\
  x+\epsilon, &   x\leq -1.
\end{cases}
\]
Summarizing the above three cases, we see that
\[
H(x)=
\begin{cases}
  x-\epsilon, &  x\geq 1,\\
   (1-\epsilon)x^{\frac{1}{1-\epsilon}}, &  0<x< 1, \\
  0, & x=0,\\
 -(1-\epsilon)^{\frac{3}{2}}\left(\frac{1}{(-x)^{2}}-\epsilon\right)^{-\frac{1}{2}}, &  -1<x< 0, \\
   x+\epsilon, &   x\leq -1,
\end{cases}
\]
where $0<\epsilon<\frac{1}{3}$.
%Similar to the procedure just shown

\noindent {\bf Step 2}.  We  show that {\color{blue} $H$ is Lipschitzian, but it is not $C^{1}$}. It suffices to show that $H$ is continuous at $0$ and $\pm 1$, and $H'(x)$ is bounded,
but is not $C^{1}$ at $0$. In fact,\\
\indent {\bf(1)} $H(x)$ is continuous at $x=1$:
   \[\begin{split}
    &\lim\limits_{x\rightarrow 1^{-}} (1-\epsilon)x^{\frac{1}{1-\epsilon}}=1-\epsilon,\\
    &\lim\limits_{x\rightarrow 1^{+}} x-\epsilon=1-\epsilon.
   \end{split}\]
\indent {\bf(2)} $H(x)$ is continuous at $x=0$:
   \[\begin{split}
    &\lim\limits_{x\rightarrow 0^{+}} (1-\epsilon)x^{\frac{1}{1-\epsilon}}=0,\\
    &\lim\limits_{x\rightarrow 0^{-}}  -(1-\epsilon)^{\frac{3}{2}}\left(\frac{1}{(-x)^{2}}-\epsilon\right)^{-\frac{1}{2}}=0.
   \end{split}\]
\indent {\bf(3)} $H(x)$ is continuous at $x=-1$:
 \[\begin{split}
    &\lim\limits_{x\rightarrow -1^{+}} -(1-\epsilon)^{\frac{3}{2}}\left(\frac{1}{(-x)^{2}}-\epsilon\right)^{-\frac{1}{2}}=-1+\epsilon,\\
    &\lim\limits_{x\rightarrow -1^{-}}  x+\epsilon=-1+\epsilon.
   \end{split}\]
Hence, $H(x)$ is continuous, but the following fact proves that $H$ is not $C^{1}$ at $0$. Clearly,
for $0<x< 1$, $H'(x)=x^{\frac{\epsilon}{1-\epsilon}}$,
and for $ -1<x< 0$, $H'(x)=(1-\epsilon)^{\frac{3}{2}}(1-\epsilon (-x)^{2})^{-\frac{3}{2}}$. Therefore,
   \[\begin{split}
    &\lim\limits_{x\rightarrow 0^{+}} x^{\frac{\epsilon}{1-\epsilon}}=0,\\
    &\lim\limits_{x\rightarrow 0^{-}}  (1-\epsilon)^{\frac{3}{2}}(1-\epsilon (-x)^{2})^{-\frac{3}{2}}=(1-\epsilon)^{\frac{3}{2}},
   \end{split}\]
it implies that $H(x)$ is not in $C^{1}$.  \\
Fortunately, $H(x)$ is Lipschitz continuous,
since $H'(x)$ is continuous at $x$ except for $x=0$, and it is bounded with $|H'|\leq 1$.
Therefore, function $H(x)$ is globally Lipschitz continuous with Lipschitz constant $L = 1$, but is not in $C^{1}$.
Fig. 1 (H) illustrates the Lipschitzian continuity of $H$.

\noindent {\bf Step 3}. {\color{blue} The inverse homeomorphism $G=H^{-1}$ is H\"older continuous, but not Lipschitzian.}
In fact, the expression of the inverse homeomorphism $G=H^{-1}$ is given as
\[
G(y)=
\begin{cases}
y+\epsilon, &   y\geq 1-\epsilon,\\
(\frac{y}{1-\epsilon})^{1-\epsilon}, &  0<y< 1-\epsilon, \\
  0, & y=0,\\
 -\left((1-\epsilon)^{3}(-y)^{-2}+\epsilon\right)^{-\frac{1}{2}}, &  -1+\epsilon<y< 0, \\
y-\epsilon, &   y\leq \epsilon-1.
\end{cases}
\]
Obviously, $G(y)$ is continuous at $0$ and $\pm 1$. Thus, $G(y)$ is a continuous function. However, this is not Lipschitz continuous since $y^{1-\epsilon}$ is not Lipschitz, as $0<1-\epsilon<1$. $G'(y)\rightarrow \infty$ as $y\rightarrow0$. As you see in Fig. 1 (G), the curve of $G$ is tangent to vertical-axis $G(y)$ at $y=0$.
\begin{center}
\begin{tabular}{cc}
\epsfxsize=8cm \epsfysize=6cm \epsffile{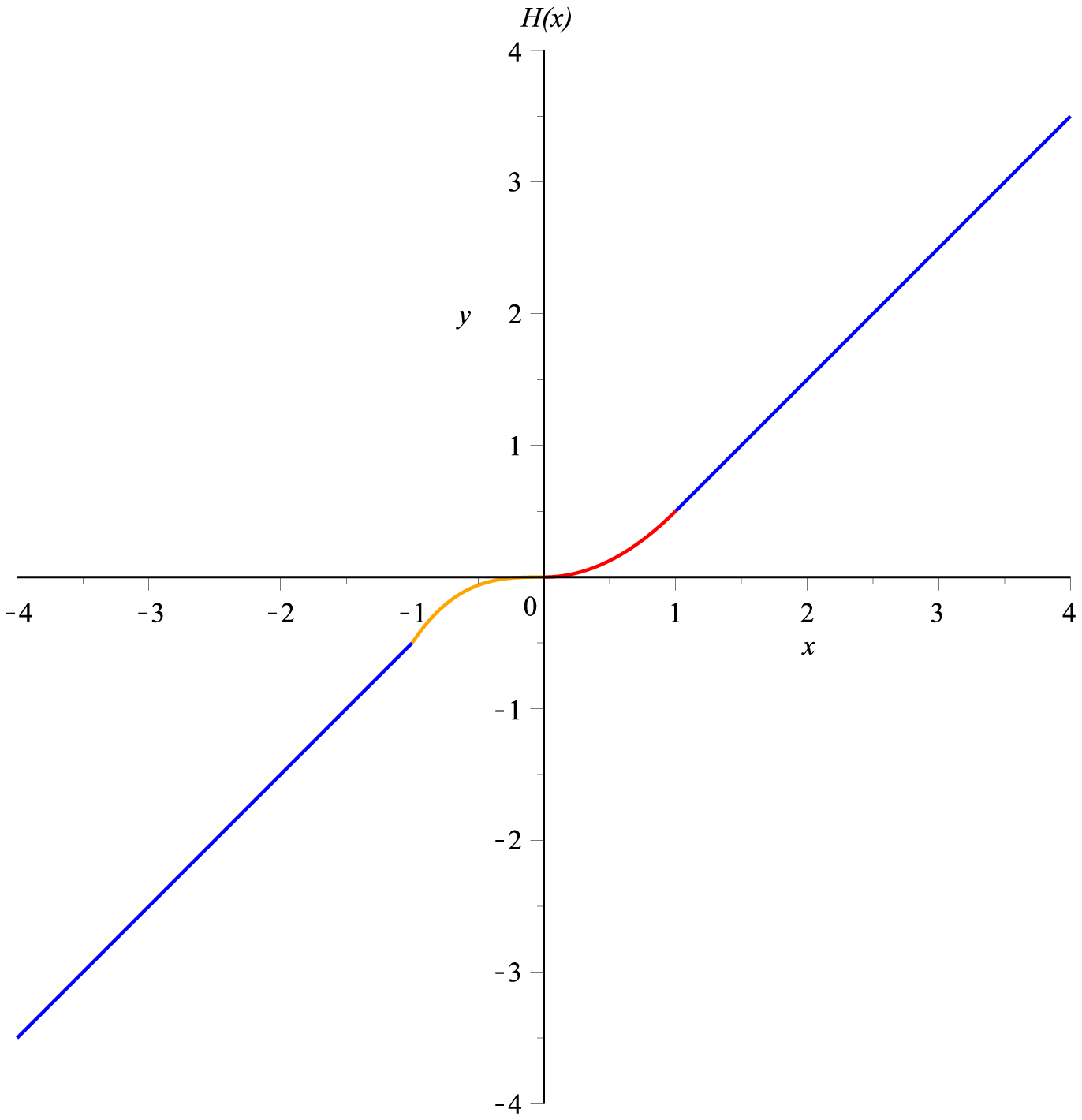}&
\epsfxsize=8cm \epsfysize=6cm \epsffile{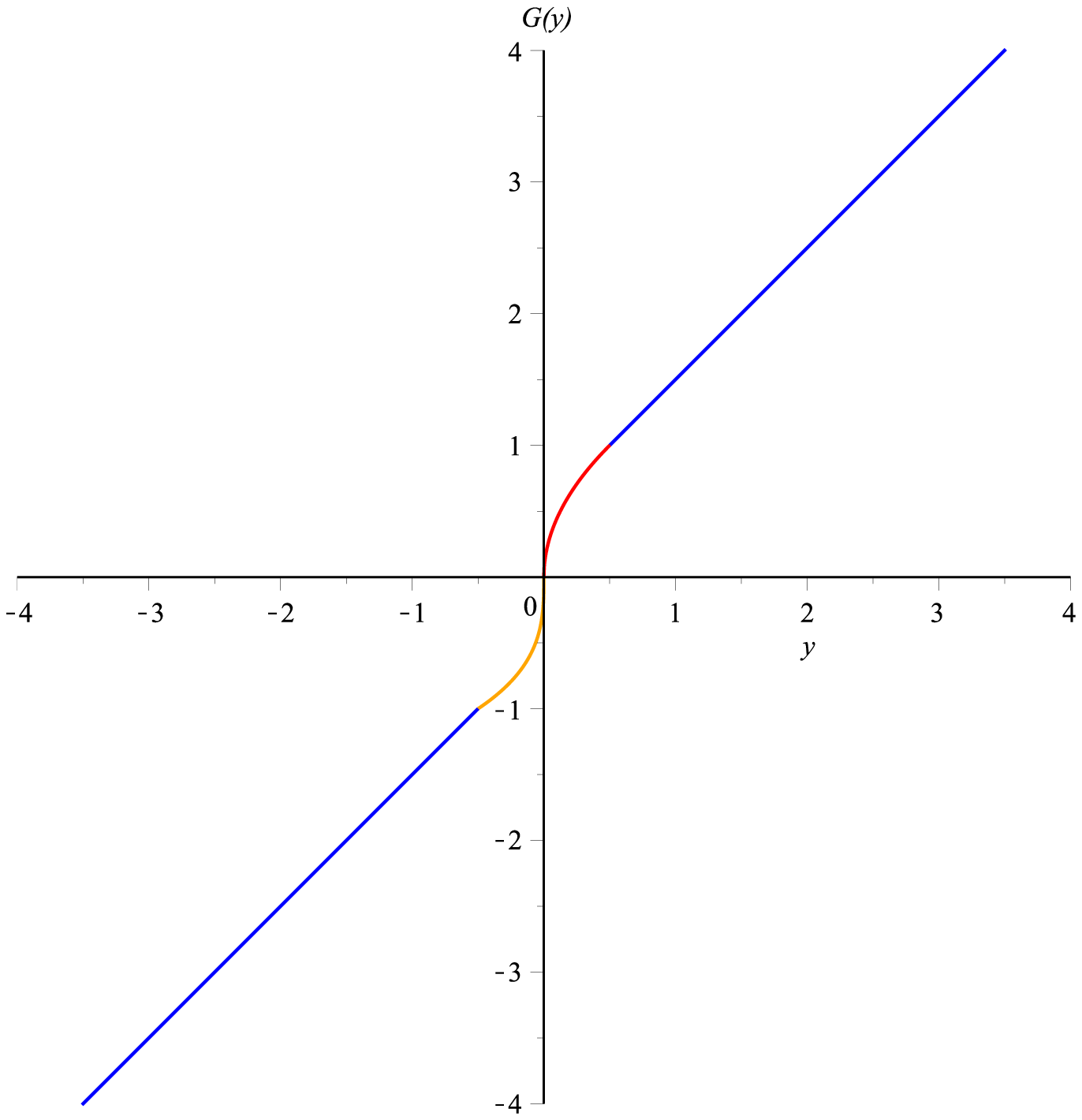}\\
\footnotesize{(H)} & \footnotesize{ (G) }
\end{tabular}
\end{center}
\begin{center}
\footnotesize {{Fig. 1} \ \ The equivalent function $H$ and its inverse $G$. See Fig. 1 (G), the curve of $G$ is tangent to vertical-axis $G(y)$ at $y=0$.}
\end{center}

}
\end{Exam}

 \subsubsection{Questions and motivation }

Counterexample above demonstrated that the topological equivalent function
$H$ could be {\bf exactly Lipschitzian} when the nonlinear perturbations are Lipschitzian.  And we find that the equivalent function $H$ and its inverse $G$ do not have the same smoothness. This new findings are different from the existing literature that
both the equivalent function and its inverse are H\"{o}lder continuous for the Hartman-Grobman theorem in $C^0$ linearization when the nonlinear perturbations are Lipschitzian (see e.g.
\cite{Hein-Pruss1,Jiang1,B-V,B-V1,B-V2,B-V3,B-V5,Xia-BSM,BDK-JDE,Xia1,Shi-Zhang1}).
%    It is known that the equivalent function $H$ which linearises the nonlinear problem according to $x(t,H(x_{0}))=H(e^{At}x_{0}), t\in\mathbb{R},x_{0}\in\mathbb{R}^{n}$, in general is not in $C^{1}$, see Chicone \cite{Chicone1}.
  Motivated by Example \ref{sharp-ex}, two natural questions arise: is it possible to improve the regularity of the equivalent function to Lipschitz continuity for any systems with Lipschitzian nonlinear perturbations? And is  the regularity sharp?
     General speaking, these are not true. Then, it is very important to know how many and what of systems with Lipschitzian nonlinear perturbations have the property that the equivalent function is Lipschitzian.

    In the present paper, we give a positive answer if the linear system admits an exponential
 contraction. In this case, we show that $H$ is exactly Lipschitzian, but the inverse $G=H^{-1}$ is still H\"{o}lder continuous if the nonlinear perturbation is Lipschitzian.
   Furthermore, it is a sharp and it is impossible to be improved anymore.
     To the best of our knowledge, it is the first time to observe these facts in $C^0$ linearization with  Lipschitzian nonlinear perturbations.

  % It was shown that both the equivalent function and its inverse are all H\"{o}lder continuous in the previous literature, see for examples \cite{Hein-Pruss1,B-V,B-V1,B-V2,B-V3,B-V5,Xia1,Shi-Zhang1}.
%In order to demonstrate our results,  we present an example   to illustrates the sharpness of the equivalent functions and we show that the  regularity could not be improved anymore. We obtain a unique concrete equivalent function $H$ and its inverse $G$ in the example (see  Example \ref{sharp-ex} in next section). It is shown in this example that the unique  equivalent function $H$ is exactly Lipschitzian, but not $C^1$; the unique inverse $G=H^{-1}$ is H\"{o}lder continuous, but not Lipschitzian. Therefore, this illustrative example shows that the sharpness of the regularity is indeed true.

   Secondly, we also discuss two special cases of the linear perturbations, which are closely related to the spectrum. If the perturbation is linear homogeneous, $H$ is $C^1$ and the inverse $G$ is H\"{o}lder continuous; If the perturbation is constant, both $H$ and $G$ are $C^{\infty}$.

%  Thirdly,  we also extend the Hartman-Grobman theorem in two aspects:(1) we may consider nonlinear term $f$ which may be unbounded or not Lipschitzian, (see Example \ref{example-1});(2) we prove that it is enough to assume that the boundedness of the operator  $\mathcal{K}$ of the coefficients.

\subsection{Outline of the paper}

 The rest of this paper is organized as follows: In Section 2, our main results are stated.
 In Section 3, some preliminary results are presented. In Section 4, rigorous proof is given to show the regularity of conjugacy.

\section{Statement of main results and illustrative examples}
Consider the following two nonautonomous systems:
   \begin{equation}\label{linear-eq}
     y'=A(t)y
   \end{equation}
and
   \begin{equation}\label{nonlinear-eq}
    x'=A(t)x+f(t,x),
   \end{equation}
where $A(t)$ is a bounded and continuous $n\times n$ matrix and $f:\mathbb{R}\times \mathbb{R}^{n}\rightarrow \mathbb{R}^{n}$ is a continuous function.

%\begin{definition}\label{defintion-conjugacy} (Topological Conjugacy)
%Suppose that there exists a function $H:\mathbb{R}\times \mathbb{R}^{n}\to \mathbb{R}^{n}$ such that\\
%(i) for each fixed $t$, $H(t,\cdot)$ is a homeomorphism of $\mathbb{R}^{n}$ into $\mathbb{R}^{n}$; \\
%(ii) $H(t,x)-x$ is uniformly bounded with respect to $t$;\\
%(iii) $G(t,\cdot)=H^{-1}(t,\cdot)$ also has property (ii);\\
%(iv) if $x(t)$ is a solution of  Eq. \eqref{nonlinear-eq}, then $H(t,x(t))$ is a solution of  Eq. \eqref{linear-eq}; and if %$y(t)$ is a solution of  Eq. \eqref{linear-eq}, then $G(t,y(t))$ is a solution of  Eq. \eqref{nonlinear-eq}.\\
%If such a map $H(t,x(t))$ exists, then  Eq. \eqref{nonlinear-eq} is topologically conjugated to Eq. \eqref{linear-eq} and the transformation $H(t,x)$ is said to be an equivalent function.
%\end{definition}

%Before presenting our main results, we need a particular case of Palmer's result as follows:
%\newtheorem{Proposition}[exam]{Proposition}
%\begin{Proposition}\label{Palmer-theorem}

 %Then there exists a unique function $H: \mathbb{R}\times \mathbb{R}^{n}\rightarrow \mathbb{R}^{n}$ such that
 %Eq. \eqref{nonlinear-eq} is topologically conjugated to Eq. \eqref{linear-eq}. Moreover, $\|H(t,x)-x\|\leq k\mu\alpha^{-1}$.
%\end{Proposition}

 \newtheorem{Remark}[exam]{Remark}

We introduce the operator $\mathcal{K}$ given by
\[  \mathcal{K}(\varphi)(t)=\int_{-\infty}^{t} U(t,s)\varphi(s)ds, \quad t\in\mathbb{R},\]
where $\varphi:\mathbb{R}\rightarrow\mathbb{R}^{n}$ is a function, $\|\mathcal{K}(\varphi)\|\leq\mathcal{L}(\|\varphi\|)$ with
\[ \mathcal{L}(b)(t)=\int_{-\infty}^{t} \exp\{-\alpha(t-s)\}b(s)ds,\]
for $b:\mathbb{R}\rightarrow (0,\infty)$ a continuous function.

Throughout this paper, we always suppose that \\
 ${\bf(A_{1})}$ there exist positive constants $k>0$ and $\alpha>0$ such that the transition matrix $U(t,s)=U(t)U^{-1}(s)$ of Eq. \eqref{linear-eq}
satisfies:
 \begin{equation}\label{contraction}
   \|U(t,s)\|\leq k \exp\{-\alpha(t-s)\},\quad \mathrm{for}\; t\geq s,
 \end{equation}
 that is, $U(t,s)$ is contractive.\\
${\bf(A_{2})}$ If there is a nonnegative local integrable functions $\mu(t)$ satisfying
$\sup_{t\in\mathbb{R}}\int_t^{t+1}\mu(s)ds \leq C_{1} $ ($C_{1}$ is a constant) such that for any $t\in\mathbb{R}$ and $x\in\mathbb{R}^{n}$, $\|f(t,x)\|\leq\mu(t)$;\\
 ${\bf(A_{3})}$ If there is a nonnegative local integrable functions $r(t)$ satisfying small enough
$C_{2}=\sup_{t\in\mathbb{R}}\int_t^{t+1}r(s)ds$ such that for any $t\in\mathbb{R}$ and $x_{1}, x_{2} \in\mathbb{R}^{n}$, $\|f(t,x_{1})-f(t,x_{2})\|\leq r(t)\|x_{1}-x_{2}\|$;\\
 ${\bf(A_{4})}$ $\sup_{t\in\mathbb{R}}\mathcal{L}(r)(t)=\theta<k^{-1}$. % where $\theta=(1-\exp\{-\alpha\})^{-1}C_{2}$.

 We remark that if $\mu(t)\equiv \mu$ and $r(s)\equiv L_f$, ${\bf(A_{2})}-{\bf(A_{4})}$ reduce to the following particular cases:\\
 ${\bf(\widetilde{A}_{2})}$ the perturbation $f$ is continuous and bounded, i.e., for any $t\in\mathbb{R}$ and $x\in\mathbb{R}^{n}$, $\|f(t,x)\|\leq\mu<+\infty$;\\
 ${\bf(\widetilde{A}_{3})}$ for any $t\in\mathbb{R}$ and $x_{1}, x_{2} \in\mathbb{R}^{n}$, $\|f(t,x_{1})-f(t,x_{2})\|\leq L_{f}\|x_{1}-x_{2}\|$;\\
 ${\bf(\widetilde{A}_{4})}$ the Lipschitz constant $L_{f}$ satisfies: $L_{f}\leq \alpha/k$.

%Now, we are in a position to present our main results.

%\begin{Remark}

%The original Palmer's linearization theorem states that Eq. \eqref{linear-eq} admits an exponential dichotomy, and nonlinear perturbation $f$ satisfies ${\bf(A_{2})}-{\bf(A_{4})}$, then
%  there exists a unique function $H: \mathbb{R}\times \mathbb{R}^{n}\rightarrow \mathbb{R}^{n}$ such that
%  Eq. \eqref{nonlinear-eq} is topologically conjugated to Eq. \eqref{linear-eq}.

%Lemma \ref{lemma1-1} shows that a big class of functions $\mu,r$ satisfy conditions \eqref{f1}, \eqref{f2}.
%$r,\mu\in L^p, 1\leq p\leq \infty, r$ with $\parallel r\parallel _p$ small enough.
%If $r$ is uniformly bounded, it is possible to choose $\alpha=0$.
%Note when $\mu(t)=\mu$ and $r(t)=r$ are constants, Theorem \ref{Thm-1} and Corollary \ref{Cor1} reduces to the classical Palmer linearization theorem.
%We note that Palmer did not give a conclusion on the Lipschitz nor the H\"older continuity of $H$.
%It should be noted that $f(t,x)$ in our theorem could be unbounded or not uniformly Lipschitzian.
%Note $\mu(t),r(t)$ are locally integrable, so they could be unbounded.
%\end{Remark}

Next  theorems are the regularity of the topological equivalent function $H$ and its inverse $G=H^{-1}$.
According to the different classification of perturbation $f$, different regularity for three cases are presented as follows.

\noindent {\bf Case 1: the perturbation $f(t,x)$ is nonlinear (not linear) with respect to $x$.} %That is, $f(t,x)\neq B(t)x+C(t)$.
\newtheorem{Theorem}[exam]{Theorem}
\begin{Theorem}\label{Thm-2}
 Suppose that the perturbation $f(t,x)$ is not linear with respect to $x$. If ${\bf(A_{1})}-{\bf(A_{4})}$  hold, then there is an equivalent function $H(t,x)$ (homeomorphism) transforming the solution of \eqref{nonlinear-eq}  onto that of linear part \eqref{linear-eq}.
 %Eq. \eqref{nonlinear-eq} is topologically conjugated to Eq. \eqref{linear-eq}.
 Furthermore, the equivalent function $H(t,x)$ and its inverse $G(t,y)$ fulfill:\\
(i) $H$ is exactly Lipschitzian,  that is, for any $x,\bar{x}\in\mathbb{R}^{n}$, there exists a positive constant $p_{1}>0$ such that
\[
\|H(t,x)-H(t,\bar{x})\| \leq p_{1} \|x-\bar{x}\|;
\]
(ii) the inverse $G=H^{-1}$ is H\"{o}lder continuous, that is, for any $y,\bar{y}\in\mathbb{R}^{n}$, there exist positive constants $p_{2}>0$ and $0<q<1$ such that
\[
 \|G(t,y)-G(t,\bar{y})\|\leq p_{2}\|y-\bar{y}\|^{q}.
\]
(iii) $H$ is Lipschitzian, but not $C^1$;  $G$ is H\"{o}lder continuous, but not Lipschitzian. The regularity of $H$ and $G$ can not be improved any more.
\end{Theorem}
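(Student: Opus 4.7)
The plan is to construct the homeomorphism $H$ via a classical Palmer-type fixed-point argument and then establish the two regularity estimates separately, with the sharpness claim (iii) carried by Example~\ref{sharp-ex}.

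\textbf{Construction.} I look for $H$ in the form $H(t,y)=y+h(t,y)$ with $h$ bounded and continuous. Enforcing the conjugacy identity $H(t,X(t,\tau,x))=U(t,\tau)H(\tau,x)$, invoking variation of constants for the nonlinear equation, and letting $\tau\to-\infty$ (legitimate because of ${\bf(A_1)}$ and the boundedness of $h$ that follows from ${\bf(A_2)}$), one obtains the explicit formula
\[
 h(t,y)=-\int_{-\infty}^{t}U(t,s)\,f\bigl(s,X(s,t,y)\bigr)\,ds,
\]
where $X$ denotes the nonlinear flow. Read as a fixed-point equation on the Banach space of bounded continuous maps $\mathbb{R}\times\mathbb{R}^n\to\mathbb{R}^n$, the contraction factor is $k\theta<1$ from ${\bf(A_4)}$, so a unique bounded $h$ exists. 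The inverse $G=H^{-1}$ arises from the symmetric ansatz $G(t,y)=y+g(t,y)$ with $g$ bounded, and the identities $H\circ G=G\circ H=\mathrm{id}$ together with continuity follow from routine estimates built on the operator $\mathcal{K}$ introduced earlier.

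\textbf{(i) Lipschitz of $H$.} Differencing the explicit formula in $y$ and invoking ${\bf(A_3)}$ gives
\[
\|H(t,y_1)-H(t,y_2)\|\le\|y_1-y_2\|+\int_{-\infty}^{t}\|U(t,s)\|\,r(s)\,\|X(s,t,y_1)-X(s,t,y_2)\|\,ds.
\]
A Gr\"{o}nwall bound along the variational equation controls $\|X(s,t,y_1)-X(s,t,y_2)\|$ in terms of $\|y_1-y_2\|$ times an exponential whose rate, when multiplied by the decay of $\|U(t,s)\|$ supplied by ${\bf(A_1)}$, yields a kernel that is integrable on $(-\infty,t]$. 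The bound $\mathcal{L}(r)(t)\le\theta$ from ${\bf(A_4)}$ closes the estimate and produces $\|H(t,y_1)-H(t,y_2)\|\le p_1\|y_1-y_2\|$ with $p_1=(1-k\theta)^{-1}$.

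\textbf{(ii) H\"{o}lder of $G$, and sharpness.} I exploit the conjugacy $G(t,U(t,\tau)y)=X(t,\tau,G(\tau,y))$ with $\tau<t$ and $T:=t-\tau>0$ a free parameter. The forward contraction of the nonlinear flow (a Gr\"{o}nwall consequence of ${\bf(A_1)}$ and ${\bf(A_3)}$) together with the uniform bound $\|g\|_\infty<\infty$ from the construction yields
\[
\|G(t,y_1)-G(t,y_2)\|\le k e^{-\beta T}\bigl(\|U(\tau,t)(y_1-y_2)\|+2\|g\|_\infty\bigr),
\]
where $\beta>0$ is the effective nonlinear contraction rate. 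Bounding the backward linear propagator by $\|U(\tau,t)\|\le e^{MT}$ with $M=\sup_t\|A(t)\|$ finite (since $A$ is bounded), and then optimising $T=c\log(1/\|y_1-y_2\|)$ with $c=(M+\beta)^{-1}$, balances the two terms and gives $\|G(t,y_1)-G(t,y_2)\|\le p_2\|y_1-y_2\|^{q}$ with exponent $q=\beta/(M+\beta)\in(0,1)$. Sharpness (iii) is then immediate from Example~\ref{sharp-ex}: its parameters satisfy ${\bf(A_1)}$--${\bf(A_4)}$, while Steps~2 and~3 of that example exhibit an $H$ that is Lipschitz but not $C^1$ at $0$ and a $G$ that is H\"{o}lder of order $1-\epsilon$ but not Lipschitz, so neither regularity can be strengthened within the class of Lipschitz perturbations.

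The principal obstacle I anticipate is the balancing argument in (ii): the H\"{o}lder exponent arises from a quantitative trade-off between the forward contraction rate of the nonlinear flow and the worst-case backward growth of the linear propagator, and the constants in both rates must be uniform in $t$. This uniformity is precisely what the structural assumptions ${\bf(A_1)}$--${\bf(A_4)}$ (plus boundedness of $A$) guarantee; the bookkeeping for merely locally integrable $\mu$ and $r$ is where the care lies.
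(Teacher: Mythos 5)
Your construction and the proof of (i) follow the paper's route exactly: the paper also writes $H=x+h$ with $h(t,(t,\xi))=-\int_{-\infty}^tU(t,s)f(s,X(s,t,\xi))\,ds$, proves a Gr\"onwall-type lemma (Lemma 3.1) giving $\|X(s,t,\xi)-X(s,t,\bar\xi)\|\le ke^{kC_2}e^{(kC_2-\alpha)|t-s|}\|\xi-\bar\xi\|$, and feeds it into the integral to obtain a Lipschitz constant (theirs is $p_1=\frac{1+k^2C_2e^{kC_2}-e^{-kC_2}}{1-e^{-kC_2}}$ rather than your $(1-k\theta)^{-1}$, which is immaterial). For (ii), however, you take a genuinely different route. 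The paper works with the successive approximations $g_{m+1}=\mathcal{T}g_m$, proves by induction the uniform bound $\|g_m(t,(t,\xi))-g_m(t,(t,\bar\xi))\|\le\lambda\|\xi-\bar\xi\|^q$ by splitting the defining integral at $t-\tau$ with $\tau=\frac{1}{\alpha+1}\ln\frac{1}{\|\xi-\bar\xi\|}$ (far part controlled by the bound $\mu$ via ${\bf(A_{2})}$, near part by $r$ and the induction hypothesis), and then lets $m\to\infty$; this yields any $q<\alpha/(\alpha+1)$ subject to the smallness condition in their display (4.1). You instead use the conjugacy identity $G(t,U(t,\tau)y)=X(t,\tau,G(\tau,y))$, the forward contraction of the nonlinear flow, the uniform bound on $g$, and the backward growth $\|U(\tau,t)\|\le e^{MT}$, then optimise $T$. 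Your argument is the shorter, classical Palmer/Barreira--Valls balancing scheme; its price is an exponent $q=\beta/(M+\beta)$ that degrades with $M=\sup_t\|A(t)\|$, whereas the paper's exponent depends only on $\alpha$ (though the paper's estimate (4.3) of $Y(s,t,\xi)$ for $s\le t$ quietly uses a backward bound $\|U(s,t)\|\le ke^{\alpha(t-s)}$ that does not follow from ${\bf(A_{1})}$ alone, so it too implicitly needs some control of the backward propagator). For (iii) you appeal to Example 1.1, which establishes sharpness as a statement about the class of Lipschitz perturbations; the paper instead asserts that $h$ is not $C^1$ ``because $f$ is not $C^1$'' and that $G$ cannot be Lipschitz because one particular estimate diverges --- neither of which is a rigorous proof for a general system --- so your reading of (iii) as a sharpness-of-the-class claim certified by the counterexample is, if anything, the more defensible one.
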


\begin{corollary}
 If the perturbation $f(t,x)$ satisfies ${\bf({A}_{1})}$, ${\bf(\widetilde{A}_{2})}$, ${\bf(\widetilde{A}_{3})}$ and ${\bf(\widetilde{A}_{4})}$
 %\begin{equation}\label{cor-condition}
 %  k\cdot (1-\exp\{-\alpha\})^{-1}C_{2}<1,
%\end{equation}
hold, then all the conclusions of Theorem \ref{Thm-2} are valid.
\end{corollary}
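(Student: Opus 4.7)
The strategy is pure reduction: verify that the constant-data conditions $(\widetilde{A}_{2})$–$(\widetilde{A}_{4})$ are the specialization of $(A_{2})$–$(A_{4})$ obtained by choosing the auxiliary functions $\mu(t)$ and $r(t)$ to be the constant functions $\mu$ and $L_f$, respectively. Once the general hypotheses of Theorem \ref{Thm-2} are seen to hold, the three conclusions transfer verbatim and no new estimates are required.

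First, set $\mu(t) \equiv \mu$. Then $\mu$ is locally integrable and $\sup_{t \in \mathbb{R}} \int_t^{t+1} \mu(s)\,ds = \mu =: C_{1} < \infty$, while the pointwise bound $\|f(t,x)\| \leq \mu = \mu(t)$ from $(\widetilde{A}_{2})$ is exactly $(A_{2})$. Next, set $r(t) \equiv L_f$; then $\sup_{t \in \mathbb{R}} \int_t^{t+1} r(s)\,ds = L_f =: C_{2}$, and the Lipschitz bound of $(\widetilde{A}_{3})$ matches the weighted Lipschitz bound of $(A_{3})$. Finally, with this choice of $r$,
\[
\mathcal{L}(r)(t) \;=\; \int_{-\infty}^{t} e^{-\alpha(t-s)} L_f \,ds \;=\; \frac{L_f}{\alpha},
\]
so $\sup_{t \in \mathbb{R}} \mathcal{L}(r)(t) = L_f/\alpha$. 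Reading $(\widetilde{A}_{4})$ as the strict inequality $L_f < \alpha/k$ that is compatible with $(A_{4})$ (precisely the form used in Example \ref{sharp-ex}, where $L_f k /\alpha = 3\epsilon < 1$), we obtain $\theta = L_f/\alpha < k^{-1}$, which is $(A_{4})$. Thus $(A_{1})$–$(A_{4})$ are all in force.

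Invoking Theorem \ref{Thm-2} therefore produces the equivalent function $H$ together with the Lipschitz estimate for $H$ in (i) and the Hölder estimate for $G = H^{-1}$ in (ii). Claim (iii)—sharpness—also requires no extra work: Example \ref{sharp-ex} is itself an instance of this corollary, with $k = 1$, $\alpha = 1$, $\mu = \epsilon$, $L_f = 3\epsilon$, and $\epsilon < 1/3$, and the explicit computation there displays an $H$ that is Lipschitz but not $C^{1}$ and a $G$ that is Hölder but not Lipschitz. The only (very minor) point worth addressing is the boundary case $L_f = \alpha/k$ in $(\widetilde{A}_{4})$: since the proof underlying Theorem \ref{Thm-2} needs $\theta < k^{-1}$ strictly, one either interprets the $\leq$ in $(\widetilde{A}_{4})$ as $<$ (consistent with the example and with $(A_{4})$), or approximates $L_f$ from below and passes to the limit; no genuine analytic obstacle arises.
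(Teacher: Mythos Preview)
Your proposal is correct and matches the paper's approach: the corollary is stated without a separate proof because the paper has already remarked, just before introducing $(\widetilde{A}_{2})$--$(\widetilde{A}_{4})$, that these are exactly the specializations of $(A_{2})$--$(A_{4})$ obtained by taking $\mu(t)\equiv\mu$ and $r(t)\equiv L_f$. Your explicit verification of $\mathcal{L}(r)(t)=L_f/\alpha$ and the appeal to Example~\ref{sharp-ex} for sharpness are precisely the intended reductions.
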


Instead of the uniform exponential contraction, next theorem is for the nonuniform case, that is, the linear system admits a nonuniform exponential contraction (see Barreira and Valls   (\cite{B-V3,B-V4}).

{
\begin{theorem}\label{NED}
  Suppose that the linear system admits a nonuniform exponential contraction, Theorem \ref{Thm-2} is true for
$$\widetilde{\mu}(t)=\mu(t)\exp\{-\epsilon|t|\} \quad \mathrm{and} \quad \widetilde{r}(t)=r(t)\exp\{-\epsilon|t|\},$$
where $\mu(t)$ and $r(t)$ are given in $(A_2)$ and $(A_3)$, respectively.
\end{theorem}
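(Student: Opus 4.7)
The plan is to reduce the nonuniform case back to the uniform one already handled in Theorem \ref{Thm-2} by exploiting the exponential damping built into the modified perturbation bounds. Recall that a nonuniform exponential contraction in the sense of Barreira--Valls takes the form
$$\|U(t,s)\| \leq k\exp\{-\alpha(t-s) + \epsilon|s|\}, \qquad t \geq s,$$
for some constants $k, \alpha > 0$ and sufficiently small $\epsilon > 0$. The crucial observation is that when $\widetilde{\mu}(s) = \mu(s)e^{-\epsilon|s|}$ and $\widetilde{r}(s) = r(s)e^{-\epsilon|s|}$ are substituted into any convolution with the evolution operator, the extra factor $e^{\epsilon|s|}$ coming from the nonuniformity is absorbed, namely
$$\int_{-\infty}^{t} \|U(t,s)\|\,\widetilde{r}(s)\,ds \leq k\int_{-\infty}^{t} e^{-\alpha(t-s)} r(s)\,ds,$$
and analogously for $\widetilde{\mu}$. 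Consequently, $\sup_{t\in\mathbb{R}}\mathcal{L}(\widetilde{r})(t)$ coincides with the uniform quantity appearing in ${\bf(A_{4})}$, and the operator $\mathcal{K}$ retains all the bounding and contraction properties required in Section 4.

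With this reduction in hand, I would first rerun the fixed-point construction underlying the proof of Theorem \ref{Thm-2} to produce the homeomorphism $H(t,x)$ conjugating \eqref{nonlinear-eq} to \eqref{linear-eq}. Because every integral estimate encountered reduces, via the cancellation above, to an inequality of precisely the same form as in the uniform setting, the existence, uniqueness and continuity of both $H$ and its inverse $G$ transfer without modification. Second, the Lipschitz bound $\|H(t,x)-H(t,\bar{x})\|\leq p_{1}\|x-\bar{x}\|$ would be derived by replaying the Gronwall-type argument used for part (i) of Theorem \ref{Thm-2}, with every occurrence of $r(s)$ replaced by $\widetilde{r}(s)$ but the resulting bound unchanged. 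The Hölder estimate for $G$ proceeds identically, its exponent $q\in(0,1)$ depending only on $\alpha$, $k$ and the integrated size of $\widetilde{r}$, which by construction equals that of $r$.

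The principal subtlety, and the place where care is needed, is verifying that the cancellation of $e^{\epsilon|s|}$ truly occurs \emph{pointwise} inside every integral that appears along the way --- not merely in the bound for $\mathcal{K}(\widetilde{r})$, but also in the estimates of $\|H(t,x)-x\|$, of the analogous expression for $G$, and in the reverse-time comparison between the nonlinear flow and its linear part. Once one checks that no remaining constant in the uniform argument depends on $t$ through the nonuniform factor $e^{\epsilon|s|}$, the entire machinery of Theorem \ref{Thm-2} extends verbatim, including the sharpness assertion (iii) which is a local statement at the fixed point and therefore insensitive to the global nonuniform weights. In short, no genuinely new ideas are required: the proof amounts to systematically rewriting every occurrence of $\mu$ or $r$ from the uniform argument as $\widetilde{\mu}$ or $\widetilde{r}$, and checking that the weight $e^{-\epsilon|s|}$ exactly neutralizes the $e^{\epsilon|s|}$ appearing in the nonuniform contraction bound.
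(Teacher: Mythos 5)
Your proposal is correct and follows essentially the same route as the paper: the paper's own justification is precisely the observation that $\mathcal{L}\{\exp\{\epsilon|\cdot|\}\widetilde{\mu}(\cdot)\}=\mathcal{L}\{\mu(\cdot)\}$ and likewise for $\widetilde{r}$, so the extra $e^{\epsilon|s|}$ from the nonuniform contraction is absorbed and all conditions of Theorem \ref{Thm-2} hold verbatim. Your additional check that the cancellation occurs pointwise in every integral is a reasonable (and slightly more careful) elaboration of the same idea, not a different method.
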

\noindent
In fact, in view of $\widetilde{\mu}(t)=\mu(t)\exp\{-\epsilon|t|\}$, it is easy to see that
 \[ \mathcal{L}\{\exp\{\epsilon|\cdot|\}\widetilde{\mu}(\cdot)\}=\int_{-\infty}^{t}\exp\{-\alpha (t-s)\}\exp\{\epsilon|s|\}\widetilde{\mu}(s)ds
    =\mathcal{L}\{\mu(\cdot)\},
 \]
similarly, $\mathcal{L}\{\exp\{\epsilon|\cdot|\}\widetilde{r}(\cdot)\}=\mathcal{L}\{r(\cdot)\}$. Thus, all conditions of  Theorem \ref{Thm-2} in the nonuniform case are satisfied.
}

\begin{remark}
  Barreira and Valls \cite{B-V3,B-V4} constructed conjugacies between linear and non-linear non-uniform  contractions and proved that both the homeomorphisms are H\"older continuous. As Theorem \ref{NED} shows, the homeomorphism is Lipschitzian and its inverse is H\"older continuous for the non-uniform  contractions.   Thus, our result improves the known results in \cite{B-V3,B-V4}.
\end{remark}

For the autonomous systems, we have following corollary on the regularity of the conjugacy. That is, we consider the autonomous system
\begin{equation}\label{autoHG}
\dot{x}=Ax+f(x).
\end{equation}
%and

\begin{corollary}
 Assume that the perturbation $f(x)$ satisfies $|f(x)|\leq\mu$, and $|f(x)-f(y)|\leq L_f\|x-y\|$,  and $L_f\leq \alpha/k$. If all the real parts of eigenvalues of $A$ are strictly less than 0,
 , then the equivalent function $H(x)$ and its inverse $G(y)$ fulfill:\\
(i) $H$ is exactly Lipschitzian,  that is, for any $x,\bar{x}\in\mathbb{R}^{n}$, there exists a positive constant $p_{1}>0$ such that
\[
\|H(x)-H(\bar{x})\| \leq p_{1} \|x-\bar{x}\|;
\]
(ii) the inverse $G=H^{-1}$ is H\"{o}lder continuous, that is, for any $y,\bar{y}\in\mathbb{R}^{n}$, there exist positive constants $p_{2}>0$ and $0<q<1$ such that
\[
 \|G(y)-G(\bar{y})\|\leq p_{2}\|y-\bar{y}\|^{q}.
\]
(iii) $H$ is Lipschitzian, but not $C^1$;  $G$ is H\"{o}lder continuous, but not Lipschitzian. The regularity of $H$ and $G$ can not be improved any more.
\end{corollary}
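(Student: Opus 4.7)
The plan is to deduce the corollary from Theorem \ref{Thm-2} by treating the autonomous system \eqref{autoHG} as a special case of the non-autonomous framework with constant weights. First I would check hypotheses $(A_1)$--$(A_4)$: since every eigenvalue of $A$ has strictly negative real part, standard matrix theory gives constants $k\ge 1$ and $\alpha>0$ with $\|e^{A\tau}\|\le k\,e^{-\alpha\tau}$ for $\tau\ge 0$, so $U(t,s)=e^{A(t-s)}$ verifies $(A_1)$. The constant choices $\mu(t)\equiv\mu$ and $r(t)\equiv L_f$ reduce $(A_2)$ and $(A_3)$ to the assumed bounds on $f$, and
\[
\sup_{t\in\mathbb{R}}\mathcal{L}(r)(t)=L_f\int_{-\infty}^{t}e^{-\alpha(t-s)}\,ds=\frac{L_f}{\alpha}\le\frac{1}{k}
\]
delivers $(A_4)$.

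Theorem \ref{Thm-2} then provides a homeomorphism $H(t,x)$ conjugating \eqref{autoHG} to its linear part, with Lipschitz and Hölder bounds uniform in $t$. The remaining task is to upgrade this to a time-independent $H(x)$. I would invoke the integral representation
\[
H(t,x)=x-\int_{-\infty}^{t}U(t,s)\,f\bigl(X(s;t,x)\bigr)\,ds
\]
used in the proof of Theorem \ref{Thm-2}, where $X(s;t,x)$ is the nonlinear solution through $(t,x)$. By autonomy $X(s;t,x)=\phi_{s-t}(x)$ (with $\phi$ the nonlinear flow) and $U(t,s)=e^{A(t-s)}$; the change of variables $u=s-t$ then eliminates $t$ from the integrand and yields
\[
H(t,x)=x-\int_{-\infty}^{0}e^{-Au}\,f\bigl(\phi_u(x)\bigr)\,du,
\]
which is visibly independent of $t$; the same holds for $G=H^{-1}$. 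Statements (i) and (ii) are then inherited directly from Theorem \ref{Thm-2}.

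For the sharpness claim (iii), Example \ref{sharp-ex} already supplies the required witness: it is precisely a scalar autonomous instance of \eqref{autoHG} with $A=-1$, $k=\alpha=1$, and $L_f=3\epsilon<1=\alpha/k$, and the explicit formulas computed in Steps 1--3 there exhibit an $H$ that is Lipschitz but fails to be $C^1$ at the origin, together with an inverse $G$ whose derivative blows up at $0$, so $G$ is Hölder but not Lipschitz. Hence the regularity obtained in (i)--(ii) is attained and cannot be improved.

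I expect the main technical obstacle to be the time-independence of $H$. Although morally forced by autonomy, a clean justification depends on matching the explicit formula from the proof of Theorem \ref{Thm-2}; should that construction be phrased differently, one can instead use a uniqueness argument. For any $\tau\in\mathbb{R}$ the shifted map $\tilde H(t,x):=H(t+\tau,x)$ is again a conjugacy between \eqref{autoHG} and its linear part and satisfies the same boundedness normalization that singles out the fixed point of the underlying contraction; uniqueness in that contraction argument forces $\tilde H\equiv H$ for every $\tau$, and consequently $H$ is $t$-invariant.
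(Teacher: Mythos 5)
Your proposal is correct and follows exactly the route the paper intends: the corollary is stated without a separate proof precisely because it is the specialization of Theorem \ref{Thm-2} to constant weights $\mu(t)\equiv\mu$, $r(t)\equiv L_f$ and $U(t,s)=e^{A(t-s)}$, with sharpness witnessed by Example \ref{sharp-ex}. Your additional verification that $H$ is genuinely time-independent (via the substitution $u=s-t$ in the integral representation, or the uniqueness argument) is a point the paper leaves implicit, and it is handled correctly.
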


Now we pay particular attention to two special cases:
%{\bf   the perturbation $f(t,x)$ is linear with respect to $x$, i.e., $f(t,x)\neq B(t)x+C(t)$.}

\noindent{\bf Case 2: $f$ is linear homogeneous perturbation.}

If $f(t,x)=B(t)x$, then equation \eqref{nonlinear-eq} reduces to
\begin{equation}\label{cor-eq}
  x'=A(t)x+B(t)x,
\end{equation}
where $B(t)$ is a continuous $n\times n$ matrix.

We need the following definition.
\begin{definition} [\cite{Sell-spectral}]
The exponential dichotomy spectrum of equation \eqref{linear-eq} is the set
\[\Sigma(A(t))=\{\gamma\in\mathbb{R}| x'=(A(t)-\gamma I)x\;\mathrm{ admits}\;\mathrm{ no}\;\mathrm{ exponential}\; \mathrm{dichotomy }\},\]
and the resolvent set $\rho(A(t))=\mathbb{R}\backslash \Sigma(A(t))$.
\end{definition}
It is known that the exponential dichotomy spectrum $\Sigma(A(t))$ of \eqref{linear-eq} is the disjoint union of $k$ closed intervals where
$0\leq k\leq n$. i.e., $\Sigma(A(t))=\bigcup_{i=1}^{k} [a_{i}, b_{i}]$, $a_{1}\leq b_{1}<a_{2}\leq b_{2}<\cdots <a_{k}\leq b_{k}$,
where $a_{1}$ could be $-\infty$ and $b_{k}$ could be $\infty$. If $k=0$, then $\Sigma(A(t))=\emptyset$ (see Sacker and Sell \cite{Sell-spectral}, Xia et al \cite{Xia-spectral,Xia-Zhang}).
%If \eqref{linear-eq} satisfies condition ${\bf(A_{1})}$, then $\Sigma(A(t))=\bigcup_{i=1}^{k} [a_{i}, b_{i}]$ and $b_{k}<0$.
Notice that instead of $f(t,x)$ being Lipschitz continuous with respect to $x$, % in Corollary \ref{Cor2},
it is differentiable with respect to $x$. Thus, a better regularity will be proved. Namely, $H$ is $C^{1}$ and its inverse $G$ is H\"{o}lder continuous.

\begin{theorem}\label{Cor3}
Suppose that $f(t,x)=B(t)x$ and \eqref{linear-eq} satisfies condition ${\bf(A_{1})}$ with $\Sigma(A(t))=\bigcup_{i=1}^{k} [a_{i}, b_{i}]$, where $b_{k}<0$.
  If there exist $\delta>0$ and  appropriately small $0\leq\epsilon< -b_{k}$ such that $\|B(t)\|\leq \delta$ and
   $\Sigma (A(t)+B(t))\subseteq \bigcup_{i=1}^{k} [a_{i}+\epsilon,b_{i}+\epsilon]$,
then the equivalent function $H(t,x)$ is $C^{1}$ and its inverse $G(t,y)$ is H\"{o}lder continuous.
\end{theorem}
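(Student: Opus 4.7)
Since $f(t,x)=B(t)x$ is linear in $x$, I expect the Palmer-type fixed-point construction behind Theorem~\ref{Thm-2} to produce an explicitly \emph{linear} conjugacy. The spectral hypothesis forces $\Sigma(A+B)$ to lie entirely to the left of $0$, so the perturbed system $x'=(A(t)+B(t))x$ is itself a uniform exponential contraction; let $V(t,s)$ denote its transition matrix. Substituting $f(s,X(s,t,x))=B(s)V(s,t)x$ into the Palmer formula suggests the candidate
\[
H(t,x) \;=\; x - \int_{-\infty}^{t} U(t,s)\,B(s)\,V(s,t)\,x\,ds \;=:\; L(t)\,x.
\]
My first task is to prove that this improper integral converges absolutely and defines a uniformly bounded matrix-valued function $L(t)$. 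I would do this by decomposing along the Sacker--Sell spectral subspaces of $A$ (and, by roughness of the spectrum, of $A+B$), so that on each block the integrand $U(t,s)B(s)V(s,t)$ decays at a rate governed by the corresponding spectral gap; the hypothesis $\Sigma(A+B)\subseteq\bigcup[a_i+\epsilon,b_i+\epsilon]$ with $\epsilon<-b_k$, together with the smallness of $\delta$, is exactly what makes every block contribute an exponentially decaying factor.

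With $L(t)$ in hand, the $C^1$ regularity of $H(t,x)=L(t)x$ in $x$ is trivial (linearity), and the regularity in $t$ follows from a matrix Sylvester-type ODE: differentiating the conjugacy identity $H(t,V(t,s)x_0)=U(t,s)H(s,x_0)$ with respect to $t$ at $t=s$ gives
\[
L'(t) \;=\; A(t)\,L(t) \;-\; L(t)\bigl[A(t)+B(t)\bigr],
\]
so continuity of $A, B$ forces $L\in C^1(\mathbb{R})$, hence $H\in C^1(\mathbb{R}\times\mathbb{R}^n)$. The uniqueness clause in Theorem~\ref{Thm-2} then identifies this $L(t)x$ with the conjugacy produced by the general construction, so no extra verification of the conjugacy identity is needed.

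For the inverse $G=H^{-1}$, the integral estimate from the first step lets me choose $\delta$ small enough that $\sup_t\|L(t)-I\|<1$; a Neumann series then inverts $L(t)$ uniformly in $t$ with $\sup_t\|L(t)^{-1}\|<\infty$, and $G(t,y)=L(t)^{-1}y$ is bounded-linear in $y$, which immediately yields the claimed Hölder (actually Lipschitz) bound. The main obstacle is precisely the convergence/decay estimate for the defining integral of $L(t)$: because $V(s,t)$ for $s\le t$ is the \emph{backward} flow of a contraction, controlling its norm requires the sharpest rates coming from the lower spectral edges $a_i+\epsilon$ of $A+B$, so one must work block-by-block along the spectral decomposition and choose $\epsilon$ in a specific window inside $[0,-b_k)$. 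Once this technical estimate is secured, every other step reduces to linear algebra and standard ODE facts.
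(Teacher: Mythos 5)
Your reduction of $H$ to a linear map $L(t)x$ starts from the same formula the paper uses (its proof of Theorem~\ref{Cor3} also works with $h(t,(t,\xi))=-\int_{-\infty}^t U(t,s)B(s)X(s,t,\xi)\,ds$, which is linear in $\xi$), but your treatment of the inverse contains a step that fails, and it fails on the paper's own example. Take the scalar case of Example~\ref{case2-ex}: $A=-1$, $B=\epsilon$, so $U(t,s)=e^{-(t-s)}$ and $V(s,t)=e^{(1-\epsilon)(t-s)}$ for $s\le t$. Then
\begin{equation*}
\int_{-\infty}^{t}U(t,s)\,B\,V(s,t)\,ds=\epsilon\int_{0}^{\infty}e^{-\epsilon u}\,du=1
\end{equation*}
for \emph{every} $\epsilon\in(0,1)$, so $\sup_t\|L(t)-I\|=1$ no matter how small $\delta$ is, and in fact $L(t)\equiv 0$. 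You cannot make $\|L(t)-I\|$ small by shrinking $\delta$ because the decay rate of the integrand $U(t,s)B(s)V(s,t)$ is itself the spectral gap, of order $\delta$; the integral is $\delta\cdot O(1/\delta)=O(1)$. Hence the Neumann-series inversion collapses, $L(t)x$ need not be a homeomorphism, and the conclusion that $G(t,y)=L(t)^{-1}y$ is linear and Lipschitz is false: Example~\ref{case2-ex} exhibits $H(x)=(1-\epsilon)x^{1/(1-\epsilon)}$ (nonlinear, $C^1$) and $G(y)=(y/(1-\epsilon))^{1-\epsilon}$, which is H\"older but \emph{not} Lipschitz at $0$ --- this is exactly why the theorem claims only H\"older continuity for $G$. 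The appeal to the uniqueness clause of Theorem~\ref{Thm-2} is also unavailable: $f(t,x)=B(t)x$ is unbounded in $x$, so $(A_2)$ fails and that theorem's construction and uniqueness do not apply to the linear perturbation.

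The paper's route differs on both counts: for $H$ it differentiates $h$ under the integral sign and controls $\partial X(s,t,\xi)/\partial\xi$ by a Gronwall estimate, obtaining only the bound $\|\partial h/\partial\xi\|\le k$ (no smallness), and for $G$ it reruns the successive-approximation H\"older argument of Theorem~\ref{Thm-2} rather than attempting a linear inversion. Your remaining concern --- convergence of the defining integral, governed by the backward growth of $V(s,t)$ --- is indeed the delicate point, but the blockwise mechanism you sketch does not close as stated: $B(s)$ need not respect the spectral decomposition, and even within a single block the forward decay of $U$ (rate $b_i$) must dominate the backward growth of $V$ (rate $a_i+\epsilon$), which requires $b_i-a_i<\epsilon$ and is not implied by the hypotheses. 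You would need either an additional gap/non-mixing assumption or the paper's Gronwall route (which itself quietly uses a two-sided bound on $U$ that $(A_1)$ does not literally provide).
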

{
\begin{Remark}
%Since $\Sigma (A(t)+B(t))\subseteq \bigcup_{i=1}^{k} [a_{i}+\epsilon,b_{i}+\epsilon]$, equation \eqref{cor-eq} is also contractive and its linearization is a simple inference of Theorem \ref{Thm-1}.

 In addition, if \eqref{linear-eq} satisfies condition ${\bf(\widetilde{A}_{1})}$ with $\Sigma(A(t))=\bigcup_{i=1}^{k} [a_{i}, b_{i}]$, where $a_1>0$,
and  there exist $\widetilde{\delta}>0$ and  appropriately small $0\leq\varepsilon< a_1$ such that $\|B(t)\|\leq \widetilde{\delta}$ and
   $\Sigma (A(t)+B(t))\subseteq \bigcup_{i=1}^{k} [a_{i}-\varepsilon,b_{i}-\varepsilon]$, then our result is similar to Theorem \ref{Cor3}.
\end{Remark}
}

\begin{Exam}\label{case2-ex}
{\em
\noindent
For Eq. \eqref{n}, we consider the nonlinear term $f=\epsilon x$, $0<\epsilon< 1$.
In this case, $\Sigma (A+B)=-1+\epsilon <0$ and $|B|=\epsilon $.
Similar to the procedure just shown, we have that
\[
H(x)=
\begin{cases}
   (1-\epsilon)x^{\frac{1}{1-\epsilon}}, &  x>0, \\
  0, & x=0,\\
(\epsilon-1)(-x)^{\frac{1}{1-\epsilon}}, &  x< 0.
\end{cases}
\]
This function $H(x)$ is at least in $C^{1}$, and $H(x)$ is Lipschitzian.
And the inverse function $G=H^{-1}$ is
\[
G(y)=
\begin{cases}
(\frac{y}{1-\epsilon})^{1-\epsilon}, &  y>0, \\
  0, & y=0,\\
 -(\frac{-y}{1-\epsilon})^{1-\epsilon}, &  y< 0.
\end{cases}
\]
This function $G(y)$ is H\"{o}lder continuous.
}
\end{Exam}

\noindent{\bf Case 3: $f$ is constant.}

If the nonlinear term $f(t,x)$ is a constant function, then equation \eqref{nonlinear-eq} reduces to a non-homogeneous linear equation.
Clearly,
\begin{remark}\label{Cor4}
If the nonlinear term $f(t,x)$ is a constant,
then there is a $C^{\infty}$ coordinate transformation such that Eq. \eqref{nonlinear-eq} is topologically conjugated to Eq. \eqref{linear-eq},
%Suppose that all condition of Corollary \ref{Cor1} hold.
 namely, the equivalent function $H(t,x)$ and its inverse $G(t,y)$
are both $C^{\infty}$.
\end{remark}

\begin{Exam}\label{case3-ex}
{\em
 If $f(x)$ is a constant, then Eq. \eqref{n} reduces to
\[x'=-x+\delta. \]
In this case, linearization theorem is always satisfied. Moreover, it is easy to obtain that
\[H(x)=x-\delta, \quad G(y)=y+\delta.\]
Thus, $H, G$ are $C^{\infty}$.
}
\end{Exam}

\section{Preliminary results}
%In order to prove Theorem \ref{Thm-1}, we will recall some facts of the Palmer's proof tailored for our purpose.
  In what follows, we always suppose that the conditions of Theorem \ref{Thm-2} are satisfied.
  Let $X(t,t_0,x_0)$ be a solution of Eq. \eqref{nonlinear-eq} satisfying the initial condition $X(t_0)=x_0$ and
    $Y(t,t_0,y_0)$ is a solution of Eq. \eqref{linear-eq} satisfying the initial condition $Y(t_0)=y_0$.

  Note that Xia et al \cite{Xia-BSM} has proved that if Eq. \eqref{linear-eq} admits an exponential dichotomy, and nonlinear perturbation $f$ satisfies ${\bf(A_{2})}-{\bf(A_{4})}$, then
  there exists a unique function $H: \mathbb{R}\times \mathbb{R}^{n}\rightarrow \mathbb{R}^{n}$ such that
  Eq. \eqref{nonlinear-eq} is topologically conjugated to Eq. \eqref{linear-eq}.
 Since the assumption ${\bf(A_{1})}$ (contraction) is a particular case of the exponential dichotomy, naturally, Eq. \eqref{nonlinear-eq} is topologically conjugated to Eq. \eqref{linear-eq}.

\newtheorem{Lemma}[exam]{Lemma}
%\begin{Lemma}\label{lemma-h}
%For each $(\tau,\xi)$, the following equation
 % \begin{equation}\label{eq-z1}
 %   z'=A(t)z-f(t,X(t,\tau,\xi)),
%  \end{equation}
%has a unique bounded solution $h(t,(\tau,\xi))$  with $\|h(t,(\tau,\xi))\|\leq k\parallel \mathcal{L}(\mu)\parallel %_\infty$.
%\end{Lemma}

%\begin{Lemma}\label{lemma-g}
%  For each $(\tau,\xi)$, the following equation
%\begin{equation}\label{eq-z2}
%  z'=A(t)z+f(t,Y(t,\tau,\xi)+z),
%\end{equation}
%has a unique bounded solution $g(t,(\tau,\xi))$, and $\|g(t,(\tau,\xi))\|\leq k\parallel \mathcal{L}(\mu)\parallel _\infty$.
%\end{Lemma}

Therefore, we can construct the two homeomorphisms as follows.
\begin{equation}
H(t,x)=x+h(t,(t,x)),\label{(H)}
\end{equation}
where
\begin{equation}\label{hhh}
   h(t,(\tau,\xi))=-\int_{-\infty}^t U(t,s)P(s)f(s,X(s,\tau,\xi))ds,%+\int_{t}^{\infty}U(t,s)(I-P(s))f(s,X(s,\tau,\xi))ds,
\end{equation}
is the unique bounded solution of the auxiliary equation
  \begin{equation*}\label{eq-z1}
    z'=A(t)z-f(t,X(t,\tau,\xi)),\,\,\,\mbox{for any fixed}\,\,\,\,(\tau,\xi).
  \end{equation*}
 And
\begin{equation}
G(t,y)=y+g(t,(t,y)),\label{(G)}
\end{equation}
where
\begin{equation}\label{ggg}
  g(t,(\tau,\xi))=\int_{-\infty}^t U(t,s)P(s)f(s,Y(s,\tau,\xi)+g(s,(\tau,\xi)))ds,
\end{equation}
%\begin{eqnarray*}
 %&\\
 %&&-\int_{t}^{\infty}U(t,s)(I-P(s))f(s,Y(s,\tau,\xi)+g(s,(\tau,\xi)))ds,
%\end{eqnarray*}
 is the unique bounded solution of the auxiliary equation
\begin{equation*}\label{eq-z2}
  z'=A(t)z+f(t,Y(t,\tau,\xi)+z),\,\,\,\mbox{for any fixed}\,\,\,\,(\tau,\xi).
\end{equation*}

To prove the regularity of the homeomorphisms, we need the following lemma.
\begin{Lemma}\label{lemma4}
If $r(t)$ is a nonnegative local integrable function and $C_2=\sup\limits_{t\in \mathbb{R}}\int_t^{t+1}r(s)ds<\infty$, then we have
\begin{equation}\label{X-eq}
  \|X(t,t_0,x_{0})-X(t,t_0,\bar{x}_{0})\|\leq ke^{kC_{2}}\|x_{0}-\bar{x}_{0}\|\cdot e^{(kC_{2}-\alpha)|t-t_0|}.
\end{equation}
\end{Lemma}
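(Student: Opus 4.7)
The plan is to combine the variation-of-parameters representation for the nonlinear flow with a Gr\"onwall-type estimate, and then to convert the cumulative integral of $r$ into a linear-in-time bound via the uniform local integrability hypothesis on $r$.

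Concretely, I would first write
\[
X(t,t_0,x_0)=U(t,t_0)x_0+\int_{t_0}^{t}U(t,s)\,f\bigl(s,X(s,t_0,x_0)\bigr)\,ds,
\]
and likewise for $X(t,t_0,\bar{x}_0)$. Subtracting and setting $\Phi(t):=X(t,t_0,x_0)-X(t,t_0,\bar{x}_0)$, the contraction bound from $(A_1)$ together with the Lipschitz estimate $\|f(s,x)-f(s,\bar{x})\|\le r(s)\|x-\bar{x}\|$ gives, for $t\ge t_0$,
\[
\|\Phi(t)\|\le k\,e^{-\alpha(t-t_0)}\|x_0-\bar{x}_0\|+\int_{t_0}^{t}k\,e^{-\alpha(t-s)}r(s)\,\|\Phi(s)\|\,ds.
\]

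Next I would multiply through by $e^{\alpha(t-t_0)}$ to remove the exponential kernel, introduce $w(t):=e^{\alpha(t-t_0)}\|\Phi(t)\|$, and apply the classical Gr\"onwall lemma to the resulting inequality
\[
w(t)\le k\|x_0-\bar{x}_0\|+k\int_{t_0}^{t}r(s)\,w(s)\,ds,
\]
obtaining $w(t)\le k\|x_0-\bar{x}_0\|\exp\!\bigl(k\int_{t_0}^{t}r(s)\,ds\bigr)$.

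The final step is to convert $\int_{t_0}^{t}r(s)\,ds$ into a quantity linear in $t-t_0$: splitting $[t_0,t]$ into unit subintervals and invoking the hypothesis $\sup_{\tau}\int_{\tau}^{\tau+1}r(s)\,ds\le C_2$ yields
\[
\int_{t_0}^{t}r(s)\,ds\le\bigl(\lfloor t-t_0\rfloor+1\bigr)C_2\le C_2+C_2(t-t_0).
\]
Substituting this into the Gr\"onwall bound and dividing through by $e^{\alpha(t-t_0)}$ produces the claimed estimate.

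I do not expect a serious obstacle here: the argument is a routine Gr\"onwall estimate for a Lipschitz perturbation of a contractive linear flow, the only mildly nontrivial point being that the uniform local $L^1$ control on $r$ is exactly what upgrades the exponential-of-integral bound into a genuine exponential in $t-t_0$. The $|t-t_0|$ appearing in the statement should be read as $t-t_0\ge 0$, since the estimate on $U(t,s)$ furnished by $(A_1)$ is one-sided; the analogous bound for $t\le t_0$, if needed, would come from running the same variation-of-constants/Gr\"onwall scheme on the time-reversed equation and separately absorbing the growth of $A(t)$ through its uniform boundedness.
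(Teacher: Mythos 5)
Your proposal matches the paper's own proof essentially line for line: the variation-of-constants formula, the Lipschitz bound via $r(s)$, the Gr\"onwall (``Bellman'') step after stripping the kernel $e^{-\alpha(t-s)}$, and the unit-interval splitting $\int_{t_0}^{t}r(s)\,ds\le([t-t_0]+1)C_2$ that yields the constant $ke^{kC_2}$ and the exponent $kC_2-\alpha$. The paper likewise writes out only the case $t\ge t_0$ and dismisses $t\le t_0$ as ``similar,'' so your closing caveat about the one-sidedness of the estimate furnished by $(A_1)$ is, if anything, more careful than the source.
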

\begin{proof}
We only prove the case of $t\geq t_0$, and the case of $t\leq t_0$ can be obtained analogously.
 Since $r(t)$ is a local integrable function, i.e., $C_{2}=\sup_{t\in\mathbb{R}}\int_{t}^{t+1}r(s)ds$, we have
\[\begin{split}
 \int_{t_0}^{t}r(s)ds =& \int_{t_0}^{1}r(s)ds+\int_{1}^{2}r(s)ds+\cdots +\int_{t-1}^{t}r(s)ds \\
                     \leq&  ([t-t_0]+1)C_{2},
\end{split}\]
where $[t-t_0]$ means taking the largest integer not great than $t-t_0$. Note that
\[ X(t,t_0,x_{0})=U(t,t_0)x_{0}+\int_{t_0}^{t}U(t,s)f(s,X(s,t_0,x_{0}))ds.\]
Then for any $x_{0}, \bar{x}_{0}\in\mathbb{R}^{n}$, the above equality leads to
\[\begin{split}
& \|X(t,t_0,x_{0})-X(t,t_0,\bar{x}_{0})\| \\
 \leq&  ke^{-\alpha (t-t_0)} \|x_{0}-\bar{x}_{0}\|
 +\int_{t_0}^{t} ke^{-\alpha(t-s)}r(s) \|X(s,t_0,x_{0})-X(s,t_0,\bar{x}_{0})\| ds.
 \end{split}\]
 Using the Bellman inequality, we have
\[\begin{split}
 \|X(t,t_0,x_{0})-X(t,t_0,\bar{x}_{0})\| \leq & ke^{\alpha t_0} \|x_{0}-\bar{x}_{0}\| e^{\int_{t_0}^{t} kr(s)ds-\alpha t} \\
             \leq &  ke^{kC_{2}} \|x_{0}-\bar{x}_{0}\|e^{(kC_{2}-\alpha)(t-t_0)}, \quad t\geq t_0.
\end{split}\]
The proof for  $t\leq t_0$  is similar.
\end{proof}

\section{Proofs of main results}
  Now we are in a position to prove our results.

\begin{proof}[Proof of Theorem \ref{Thm-2}.] We divide the proof of Theorem \ref{Thm-2} into four steps.\\
{\bf Step 1.}
We are going to prove the Lipschitz continuity of the equivalent function $H(t,x)$ by Lemma \ref{lemma4}. That is, we will show that
\[
\|H(t,x)-H(t,\overline{x})\|\leq p_{1}\|x-\overline{x}\|, \qquad p=\frac{1+k^{2}C_{2}e^{kC_{2}}-e^{-kC_{2}}}{1-e^{-kC_{2}}}.
\]
In fact, by uniqueness $X(t,(\tau,\xi))=X(t,(t,X(t,(\tau,\xi)))$. From \eqref{hhh}, it follows that
\[ h(t,(t,\xi))=-\int_{-\infty}^t U(t,s)f(s,X(s,t,\xi))ds.\]
Thus we get
\begin{eqnarray*}
 h(t,(t,\xi))-h(t,(t,\overline{\xi}))=-\int_{-\infty}^t U(t,s)(f(s,X(s,t,\xi))-f(s,X(s,t,\overline{\xi})))ds. \qquad \textbf{$(I_1)$}
\end{eqnarray*}
By condition {\bf (A$_2$)} and Lemma \ref{lemma4}, we have
\[\begin{split}
 \|I_{1}\| \leq & \int_{-\infty}^{t} ke^{-\alpha(t-s)} r(s)\cdot \|X(s,t,\xi)-X(s,t,\bar{\xi})\|ds \\
           \leq & \int_{-\infty}^{t} ke^{-\alpha(t-s)} r(s)\cdot ke^{kC_{2}}\|\xi-\bar{\xi}\|e^{(kC_{2}-\alpha)(s-t)}ds\\
            \leq & \frac{k^{2}C_{2}e^{kC_{2}}}{1-e^{-kC_{2}}}\cdot \|\xi-\bar{\xi}\|.
\end{split}\]
By the definition of $H(t,x)$,
\[
\begin{split}
 \|H(t,x)-H(t,\overline{x})\|\leq &\|x-\overline{x}\|+\frac{k^{2}C_{2}e^{kC_{2}}}{1-e^{-kC_{2}}}\|x-\overline{x}\|\\
 \leq& \frac{1+k^{2}C_{2}e^{kC_{2}}-e^{-kC_{2}}}{1-e^{-kC_{2}}} \|x-\overline{x}\|\\
 \equiv& p_{1}\|x-\overline{x}\|,
\end{split}
\]
which implies that $H$ is Lipschitzian.
%This completes the proof of Step 1.

%\begin{remark}\label{f-notC1}
%We prove that $H=x+h$ is Lipschitzian above. However, the regularity of $H$ can not be improved any more. $H$ is not $C^1$. Thus, Lipschitzian is the best regularity of $H$ for $C^0$ linearization. In fact, $h$ is not $C^1$ because $f$ is Lipschitzian, not $C^1$.
%\end{remark}

\noindent {\bf Step 2.} We show that $H$ is not $C^1$. In fact, noticing $H=x+h$ and the expression of $h$ (see \eqref{hhh}), it is easy to see that
$h$ is not $C^1$ because $f$ is Lipschitzian, not $C^1$.

\noindent {\bf Step 3.} We claim that there exist positive constants $p_{2}>0$ and $0<q<1$ such that for all $t\in\mathbb{R}, y, \bar{y}\in\mathbb{R}^{n}$,
\[  \|G(t,y)-G(t,\bar{y})\|\leq p_{2}\|y-\bar{y}\|^{q}.\]
Usually this point is treated with successive approximations.
 From  \eqref{ggg}, we know that $g(t,(\tau,\xi))$ is a fixed point of the following map $\mathcal{T}$
\begin{equation}\label{g-expression}
\begin{split}
 (\mathcal{T}z)(t)=&\int_{-\infty}^t U(t,s)f(s,Y(s,\tau,\xi)+z(s))ds.
 \end{split}
\end{equation}
Let $g_{0}(t,(\tau,\xi))\equiv 0$, and by recursion define
\[\begin{split}
 g_{m+1}(t,(\tau,\xi))=& \int_{-\infty}^t U(t,s)f(s,Y(s,\tau,\xi)+g_{m}(s,(\tau,\xi)))ds.
\end{split} \]
  It is not difficult to show that
\[ g_{m}(t,(\tau,\xi))\rightarrow  g(t,(\tau,\xi)), \quad \text{as} \ \ m\rightarrow +\infty, \]
  uniformly with respect to $t, \tau,\eta$.
\\
  Note that $g_{0}(t,(\tau,\xi))=g_{0}(t,(t,Y(t,\tau,\xi)))$.
  By induction, we see that
 $g_{m}(t,(\tau,\xi))=g_{m}(t,(t,Y(t,\tau,\xi)))$, for all $m (m\in\mathbb{N})$.
Choose a sufficiently large positive constant $\lambda0$ and a sufficiently small positive constant $0<q<1$ such that
\begin{equation}\label{4.1}
  \begin{cases}
  \lambda>\frac{3kC_{1}}{1-e^{-\alpha}}+\frac{3}{2}k^{2}C_{2},\\
q<\frac{\alpha}{\alpha+1}<1,\\
  0<\frac{C_{2}k^{q+1}}{e^{\alpha-\alpha q}-1}<\frac{1}{3}.
  \end{cases}
\end{equation}
In what follows, we prove that
\begin{equation}\label{4.2}
  \|g_{k}(t,(t, \xi))-g_{k}(t,(t, \bar{\xi}))\|<\lambda\|\xi-\bar{\xi}\|^{q},\,0<\|\xi-\bar{\xi}\|<1, \, k\in \mathbb{N}.
\end{equation}
Clearly, \eqref{4.2} holds for $k=0$. Suppose that \eqref{4.2} holds for $k=m$.
It suffices to prove that  \eqref{4.2} holds for $k=m+1$. Note
\begin{equation*}
\begin{split}
&g_{m+1}(t,(t,\xi))-g_{m+1}(t,(t,\bar{\xi})) \\
=& \int_{-\infty}^t U(t,s)[f(s,Y(s,t,\xi)+g_{m}(s,(t,\xi)))
- f(s,Y(s,t,\bar{\xi})+g_{m}(s,(t,\bar{\xi})))]ds. \qquad \textbf{$(J_1)$}
\end{split}
\end{equation*}
We divide $J_{1}$ into two parts:
\[ J_{1}=\int_{-\infty}^{t-\tau} + \int_{t-\tau}^{t}\triangleq J_{11}+J_{12},\]
where $\tau=\frac{1}{\alpha+1} \ln \frac{1}{\|\xi-\bar{\xi}\|}$. Then by conditions ${\bf(A_{1})}$ and ${\bf(A_{2})}$, we deduce that
$$
\begin{aligned}
\|J_{11}\|  \leq& \int_{-\infty}^{t-\tau} k e^{-\alpha(t-s)} 2 \mu(s) d s \\
\leq& \frac{2 k C_{1}}{1-e^{-\alpha}}\|\xi-\bar{\xi}\|^{\frac{\alpha}{\alpha+1}}.
\end{aligned}
$$
Since Eq. \eqref{linear-eq} satisfies condition ${\bf(A_{1})}$, we have
\begin{equation}\label{4.3}
  \|Y(s,t,\xi)-Y(s,t,\bar{\xi})\|\leq  ke^{-\alpha(s-t)}\|\xi-\bar{\xi}\|.
\end{equation}
Furthermore, it follows from ${\bf(A_{1})}$, ${\bf(A_{3})}$ and \eqref{4.3} that
\[\begin{split}
 \|g_{m}(s,(t,\xi))-g_{m}(s,(t,\bar{\xi}))\| =&\|g_{m}(s,(s,Y(s,t,\xi)))-g_{m}(s,(s,Y(s,t,\bar{\xi})))\| \\
 \leq& \lambda\|Y(s,t,\xi)-Y(s,t,\bar{\xi})\|^{q} \\
 \leq& \lambda k^{q} \|\xi-\bar{\xi}\|^{q}\cdot e^{-\alpha q(s-t)},
\end{split}\]
and
\[\begin{split}
 \|J_{12}\| \leq & \int_{t-\tau}^{t}  k  e^{-\alpha(t-s)} r(s)[\|Y(s,t,\xi)-Y(s,t,\bar{\xi})\|+\|g_{m}(s,(t,\xi))-g_{m}(s,(t,\bar{\xi}))\| ]ds\\
 \leq & \int_{t-\tau}^{t}  k e^{-\alpha(t-s)} r(s)[ke^{-\alpha(s-t)}\|\xi-\bar{\xi}\|
   +\lambda k^{q} e^{-\alpha q(s-t)} \|\xi-\bar{\xi}\|^{q}]ds \\
 \leq & \int_{t-\tau}^{t} k^{2}r(s)\|\xi-\bar{\xi}\|ds+\int_{t-\tau}^{t}\lambda k^{q+1}r(s)e^{(\alpha-\alpha q)(s-t)} \|\xi-\bar{\xi}\|^{q}ds\\
 \leq & \sum_{m \in[0,[\tau]]} k^{2}\|\xi-\bar{\xi}\|\int_{t-\tau+m}^{t-\tau+m+1} r(s)ds\\
  &+\sum_{m \in[0,[\tau]]}\lambda k^{q+1}\|\xi-\bar{\xi}\|^{q}\int_{t-\tau+m}^{t-\tau+m+1} r(s)e^{(\alpha-\alpha q)(s-t)}ds \\
 \leq & k^{2}C_{2}\tau \|\xi-\bar{\xi}\|
   +\sum_{m \in[0,[\tau]]}\lambda k^{q+1}\|\xi-\bar{\xi}\|^{q}C_{2} e^{(\alpha q-\alpha)\tau}e^{(\alpha-\alpha q)(m+1)}\\
 \leq & k^{2}C_{2}e^{\tau} \|\xi-\bar{\xi}\|+\lambda k^{q+1}C_{2}e^{(\alpha q-\alpha)(\tau-1)}\|\xi-\bar{\xi}\|^{q}
 \cdot \frac{1-e^{(\alpha-\alpha q)[\tau]}}{1-e^{\alpha-\alpha q}} \\
 \leq & k^{2}C_{2}\|\xi-\bar{\xi}\|^{\frac{\alpha}{\alpha+1}}
  +\frac{\lambda C_{2} k^{q+1}}{e^{\alpha-\alpha q}-1}\|\xi-\bar{\xi}\|^{q}.
\end{split}\]
Notice that $\alpha-\alpha q>0$ implies that $\exp\{\alpha-\alpha q\}>1$ and $q<\frac{\alpha}{\alpha+1}$. Then
\[  \|J_{12}\| \leq  \left(C_{2} k^{2}+\frac{\lambda C_{2} k^{q+1}}{e^{\alpha-\alpha q}-1}\right)\|\xi-\bar{\xi}\|^{q}. \]
From \eqref{4.1}, we have
\[
\begin{split}
&\|g_{m+1}(t,(t, \xi))-g_{m+1}(t,(t, \bar{\xi}))\|\leq  \|J_{11}\|+\|J_{12}\| \\
\leq& \left(\frac{2 k C_{1}}{1-e^{-\alpha}}+C_{2} k^{2}+\frac{\lambda C_{2} k^{q+1}}{e^{\alpha-\alpha q}-1}\right) \|\xi-\bar{\xi}\|^{q} \\
\leq& \lambda\|\xi-\bar{\xi}\|^{q}.
\end{split}
\]
Thus inequality \eqref{4.2} holds for any $k\in\mathbb{N}$. Setting $k\rightarrow +\infty$, we have
\[\|g(t,(t, \xi))-g(t,(t, \bar{\xi}))\|\leq \lambda\|\xi-\bar{\xi}\|^{q}. \]
In view of $G=x=g,$
\[
\|G(t, y)-G(t, \bar{y})\|  \leq(1+\lambda)\|y-\bar{y}\|^{\beta}=p_{2}\|y-\bar{y}\|^{q}.
\]
%This completes the proof of Step 2.
\end{proof}

%\newtheorem{myremark}[exam]{Remark}
%\begin{myremark}\label{g-notlip}
\noindent{\bf Step 4.}
We prove that the inverse of the equivalent function $G$ could not be Lipschitzian.
In fact, %in the proof the H\"{o}lder regularity of $G$, we divide $J_{1}$ into two parts: $J_{11},J_{12}$.
if we directly prove that $G$ is Lipschitzian by above similar arguments, then the following contradiction will appear:
\[ \begin{split}
J_{1}  &\leq  \int_{-\infty}^{t}  k  e^{-\alpha(t-s)} r(s)[\|Y(s,t,\xi)-Y(s,t,\bar{\xi})\|+\|g_{m}(s,(t,\xi))-g_{m}(s,(t,\bar{\xi}))\| ]ds\\
      &\leq \int_{-\infty}^{t}  k e^{-\alpha(t-s)} r(s)[ke^{-\alpha(s-t)}\|\xi-\bar{\xi}\|
   +\lambda k e^{-\alpha (s-t)} \|\xi-\bar{\xi}\|]ds \\
      &\leq  \int_{-\infty}^{t} k^{2}r(s)\|\xi-\bar{\xi}\|ds+\int_{-\infty}^{t}\lambda k^{2}r(s) \|\xi-\bar{\xi}\|ds.
\end{split}\]
Obviously, the integral $\int_{-\infty}^{t}r(s)ds$ is divergent in the above right-hand estimation, thus, $J_{1}$ is not Lipschitzian.
Consequently, the equivalent function $G$ could not be Lipschitzian.
%To verify our results, we will give an appropriate example in Section 5.
%\end{myremark}
The proof of main theorem is complete.

\begin{proof}[Proof of Theorem \ref{Cor3}.]
{\bf Step 1. proof of the map $H\in C^{1}$:}\\
  Note that
\[ h(t,(t,\xi))=-\int_{-\infty}^t U(t,s)B(s)X(s,t,\xi)ds.\]
Thus we can deduce that
\[ \frac{\partial h(t,(t,\xi))}{\partial \xi}=-\int_{-\infty}^{t} U(t,s)B(s) \frac{\partial X(s,t,\xi)}{\partial \xi}ds,  \]
and this integral is convergent, i.e., it is well defined. In fact, we need the following auxiliary statement:
\[ \frac{\partial X(t,0,\xi)}{\partial \xi}=U(t,0)+\int_{0}^{t} U(t,\tau)B(\tau)\frac{\partial X(\tau,0,\xi)}{\partial \xi}d\tau. \]
Since $\|U(t,\tau)\|\leq ke^{-\alpha(t-\tau)}$ and $\|B(\tau)\|\leq \delta$, we have
\[ \left\|\frac{\partial X(t,0,\xi)}{\partial \xi}\right\|\leq ke^{-\alpha t}+\int_{0}^{t} ke^{-\alpha(t-\tau)}\cdot \delta \cdot
\left\|\frac{\partial X(\tau,0,\xi)}{\partial \xi}\right\|d\tau. \]
Using the Bellman inequality, we obtain
\[ \left\|\frac{\partial X(t,0,\xi)}{\partial \xi}\right\| \leq ke^{(k\delta-\alpha)t}.\]
We now continue with the proof of $H \in C^{1}$.  Since
\[\begin{split}
 \left\| \frac{\partial h(t,(t,\xi))}{\partial \xi} \right\| \leq & \int_{-\infty}^{t}ke^{-\alpha(t-\tau)}\cdot \delta \cdot
                \left\| \frac{\partial X(s,t,\xi)}{\partial \xi}\right\|ds \\
                \leq & \int_{-\infty}^{t} k\delta e^{-\alpha(t-\tau)}\cdot ke^{(k\delta-\alpha)(s-t)}ds\\
                \leq & k,
\end{split}\]
$ \frac{\partial h(t,(t,\xi))}{\partial \xi}$ is well defined and $h\in C^{1}$. Hence, it is clear that $H \in C^{1}$.\\
{\bf Step 2.  H\"{o}lder regularity of  $G$:}
The proof is similar to that Theorem \ref{Thm-2}.
\end{proof}

\section{Conflict of Interest}
\hskip\parindent
The authors declare that they have no conflict of interest.

\section{Data Availability Statement}
%\hskip\parindent
My manuscript has no associated data. It is pure mathematics.

\section*{Contributions}
 We declare that all the authors have same contributions to this paper.

\end{document}